\newcommand{\Real}{\mathop{\rm Re}\nolimits}
\newcommand{\Imag}{\mathop{\rm Im}\nolimits}
\begin{document}
\title{On the relation between extremal elasticity tensors with
orthotropic symmetry and extremal polynomials. }
\author{Davit Harutyunyan and Graeme Walter Milton\\
\textit{Department of Mathematics, The University of Utah}}

\maketitle
\begin{abstract}
We prove that an elasticity tensor with orthotropic symmetry is extremal if the determinant of its acoustic tensor
is an extremal polynomial that is not a perfect square.
\end{abstract}

\textbf{Keywords:}\ \   Extremal quasiconvex functions, polyconvexity, rank-one convexity

\newtheorem{Theorem}{Theorem}[section]
\newtheorem{Lemma}[Theorem]{Lemma}
\newtheorem{Corollary}[Theorem]{Corollary}
\newtheorem{Remark}[Theorem]{Remark}
\newtheorem{Definition}[Theorem]{Definition}

\section{Introduction}
\label{sec:intro}

A necessary condition for a body containing a linearly elastic homogeneous material
with elasticity tensor $C$ to be stable when the displacement is fixed at the boundary
is the Legendre-Hadamard condition that the quadratic form associated with $C$, $f(\xi)=(C\xi;\xi)$
be rank-one convex, i.e.
$$ f(x\otimes y)=\sum_{ijk\ell=1}^3x_iy_jC_{ijk\ell}x_ky_\ell\geq 0\quad \forall x,y. $$
If one has equality for some non-zero $x,y$ then shear bands can form.
Associated with $f$ is the $y$-matrix (acoustic tensor), $T(y)$ with matrix elements
$$ T_{ik}(y)=\sum_{j\ell=1}^3y_jC_{ijk\ell}y_\ell\geq 0\quad \forall x,y, $$
so rank-one convexity is equivalent to $ T(y)$ being positive semi-definite for all $y$,
which ensures real-valued wave speeds.
The elasticity tensor $C$ need not be positive semi-definite for this condition to be satisfied, i.e $f(\xi)$ need not be convex.

In this paper we show there is an interesting connection between extremal
polynomials and extremal elasticity tensors which are at the boundary of being rank-one convex.

In general for quadratic functions $f(\xi)=(M\xi;\xi)$, with $M$ not
necessarily having the symmetry of elasticity tensors, Van Hove [\ref{bib:VanHove1},\ref{bib:VanHove2}]
proved that rank-one convexity is equivalent to the condition of
quasiconvexity introduced by  Morrey [\ref{bib:Mor1},\ref{bib:Mor2}]. Due to this, and since we are only dealing with quadratic functions
we will use the terms quasiconvexity and rank-one convexity interchangeably. (If the fields were not gradients but had different differential constraints
then rank-one convexity is no longer appropriate but quasiconvexity is appropriate, and hence we have a preference for the term  quasiconvexity). Ball
introduced the condition of polyconvexity and proved it
to be an intermediate condition between convexity and quasiconvexity [\ref{bib:Ball1}]. In the quadratic case polyconvexity is equivalent
[\ref{bib:Dac}, page 192, Lemma 5.27]
to $f$ being the sum of a convex function and a null-Lagrangian, which in the quadratic case is a function $f$ such that $f(x\otimes y)$ vanishes for all $x$ and $y$. There exist quadratic forms that are quasiconvex but not polyconvex, as shown by Terpstra in [\ref{bib:Terp2}]. Explicit examples were
given by Serre [\ref{bib:Ser1},\ref{bib:Ser2}], see also Ball [\ref{bib:Ball2}], and an especially simple example is given in [\ref{bib:Har.Mil}].
A special case of quasiconvex quadratic forms are the so called extremal ones introduced by Milton in [\ref{bib:Mil3}, page 87], see also [\ref{bib:Mil1}, section 25.2]. This and two alternative definitions of extremals were used in [\ref{bib:Har.Mil}]. In this work we will use a definition which is equivalent
to the original definition:
\begin{Definition}
\label{def:1}
A quadratic quasiconvex form is called an extremal if one cannot subtract a
rank-one form from it while preserving the quasiconvexity of the form.
\end{Definition}
If a quadratic form $f(\xi)=(C\xi;\xi)$ is extremal and does not depend on the antisymmetric part of $\xi$ we call $C$ an extremal elasticity tensor.
We prove that an elasticity tensor with orthotropic symmetry is extremal if the determinant of the $y-$matrix (acoustic tensor) is an extremal polynomial that is not a perfect square. The problem of characterizing all such extremals is a task for the future.

\section{Motivations for studying extremals}
\label{motivation}
One motivation for studying extremals comes when bounding, using the translation method, the elastic energy in a multiphase phase periodic composite
with known volume fractions of the phases. Then it is always best to use translations $C$ such that the quadratic form $f(\xi)=(C\xi;\xi)$ is an extremal
[\ref{bib:Mil3}, page 87], see also [\ref{bib:Mil1}, section 25.2].
Extremals (with an alternative definition of extremal: one cannot subtract a symmetrized rank-one form from it while preserving the quasiconvexity)
were used by Allaire and Kohn [\ref{bib:All.Kohn}] in this way to bound the elastic energy of two phase composites with isotropic phases.

Extremals may also be important for obtaining sharp geometry independent estimates with Dirichlet boundary conditions of the elastic energy stored within say a two-phase body $\Omega$ (where by geometry independent we mean independent of the distribution of the phases in the body, not independent of the shape of the body).
The ensuing analysis is an extension of the ideas of Tartar and Murat [\ref{bib:Tar0},\ref{bib:Mur.Tar},\ref{bib:Tar2}]
and Lurie and Cherkaev [\ref{bib:LurCherk1}, \ref{bib:LurCherk2}] for bounding the effective moduli of composite materials using the translation method
and that of Kang and Milton [\ref{bib:Kan.Mil}] for bounding the volume fractions of materials in a two-phase body.

Let $\tilde{C}(x)$ denote the elasticity tensor taking the positive definite value $C_1$ in phase 1 and the positive definite value $C_2$ in phase 2. With
Dirichlet boundary conditions
$\tilde{u}=u_0$ on $\partial\Omega$, the elastic energy is
\begin{equation} \tilde{W}(u_0)=\frac{1}{2}\int_{\Omega}(\tilde{C}\nabla \tilde{u};\nabla \tilde{u})\,dx
\end{equation}
where the stress $\tilde{C}\nabla \tilde{u}$ is symmetric and only depends on the strain $\tilde{\epsilon}(x)=[\nabla \tilde{u}(x)+(\nabla \tilde{u(x)})^T]/2$,
since $\tilde{C}A=0$ when $A$ is antisymmetric.
Let the quadratic form associated with $C$ be quasiconvex and chosen so that $\tilde{C}(x)-C$ is positive semidefinite,
while $C_1-C$ and/or $C_2-C$ is degenerate (on the space of symmetric matrices when $C$ has the symmetries of elasticity tensors). Writing $\tilde{C}(x)=[\tilde{C}(x)-C]+C$, we have
\begin{equation} \tilde{W}(u_0)\geq \frac{1}{2}\int_{\Omega}f(\nabla\tilde{u})\,dx
\end{equation}
with equality when $\nabla \tilde{u}$ is in the null space of $\tilde{C}(x)-C$. Suppose we are able to find one solution of the elasticity equations in the medium with tensor $C$, i.e.
\begin{equation}
\label{solution}
 \nabla\cdot\sigma=0,\quad \sigma=C\nabla u
\end{equation}
with $u=u_0$ on $\partial\Omega$: if necessary we could start with a solution to the equations (\ref{solution}) and choose $u_0$ as the boundary value of $u$. Then because
the quadratic form $C$ is quasiconvex
\begin{equation}
\label{condition}
\int_{\Omega}f(\nabla \tilde{u})\,dx\geq \int_{\Omega}f(\nabla{u})\,dx=\int_{\partial\Omega}u\cdot(\sigma\cdot n)\,dS\equiv W(u_0)
\end{equation}
where $n$ is the outward normal to the surface $\partial\Omega$. To see the condition for equality in (\ref{condition})
define $\delta u=\tilde{u}-u$ inside $\Omega$, then find a cube $B$ containing
$\Omega$, set $\delta u$ to be zero inside the remainder of the cube outside $\Omega$, and finally extend $\delta u$ to be periodic with this cube $B$ as a unit cell. Then $\nabla\delta u$ has zero average value over the unit cell and
since $u$ solves (\ref{solution}),
\begin{equation}
\int_{\Omega}f(\nabla \tilde{u})- f(\nabla{u})\,dx=\int_Bf(\nabla \delta{u})\,dx
 \end{equation}
The condition for this to be zero is easily found using the rank-one convexity and Plancherel's theorem:
each Fourier component $\widehat{\delta{u}}(k)$ of $\delta{u}$ must be such that
\begin{equation}
f(\Real\widehat{\delta{u}}(k)\otimes k)=0\quad{\rm and}\quad f(\Imag\widehat{\delta{u}}(k)\otimes k)=0
\end{equation}
where $\Real\widehat{\delta{u}}(k)$ and $\Imag\widehat{\delta{u}}(k)$ are the real and imaginary parts of $\widehat{\delta{u}}(k)$.
Fields $\nabla\delta u$ satisfying this condition are called special fields and a necessary condition for them to exist is that $C$ not be strictly
quasiconvex.

In summary we have the inequality
\begin{equation}
\tilde{W}(u_0)\geq W(u_0),
\end{equation}
which will be sharp when $\nabla \tilde{u}$ is in the null space of $\tilde{C}(x)-C$ and $\nabla\delta u$ is a special field which
vanishes in $B\setminus\Omega$. Our chances of finding such fields are greatest when $C_1-C$ and/or $C_2-C$ is especially degenerate
and when there are lots of special fields which vanish in $B\setminus\Omega$. The last condition is most likely to hold when
$f(\xi)$ is extremal, although an example has yet to be produced of an extremal function of gradients, other than a null-Lagrangian, for which there exist
special fields which vanish in $B\setminus\Omega$ when $\Omega$ is strictly contained in $B$. However, for bounding the energy stored
in a unit cell $\Omega$ of a periodic composite with periodic boundary conditions on $\nabla \tilde{u}$ (which is relevant to
bounding the effective moduli using the comparison bound) we can take $B=\Omega$ and so any special field automatically vanishes in $B\setminus\Omega$
since $B\setminus\Omega$ is empty.

\section{Orthotropic materials}
\label{sec:orthotropic}

We now briefly introduce orthotropic materials. A homogeneous orthotropic elastic material has three mutually orthogonal planes such that the
material properties are symmetric under reflection about each plane. If cartesian coordinate axes are chosen orthogonal to these planes, then the properties
are invariant under the transformation $x_a \to -x_a$, $x_b \to x_b$, and $x_c\to x_c$, where $abc$ is permutation of $123$. Elements of the elasticity tensor
such as $C_{abcc}$ and $C_{abbb}$ in general change sign under such a transformation, so these must be zero. Thus the elements $C_{ijk\ell}$ of the elasticity tensor must be zero
unless the indices $ijk\ell$ contain an even number of repetitions of the indices $1$, $2$ or $3$. Using the Voigt notation for the elements of $C$ the
constitutive law takes the form $\sigma=C\epsilon$
where
\begin{equation}
\label{stiffness.matrix}
\sigma=\begin{bmatrix} \sigma_{11} \\ \sigma_{22} \\ \sigma_{33} \\ \sigma_{23} \\ \sigma_{31} \\ \sigma_{12} \end{bmatrix}, \quad
\epsilon=\begin{bmatrix} \epsilon_{11} \\ \epsilon_{22} \\ \epsilon_{33} \\ 2\epsilon_{23} \\ 2\epsilon_{31} \\ 2\epsilon_{12} \end{bmatrix}, \quad
C=
\begin{bmatrix}
C_{11} & C_{12} & C_{13} & 0 & 0 & 0\\
C_{12} & C_{22} & C_{23} & 0 & 0 & 0\\
C_{13} & C_{23} & C_{33} & 0 & 0 & 0\\
0 & 0 & 0 & C_{44} & 0 & 0\\
0 & 0 & 0 & 0 & C_{55} & 0\\
0 & 0 & 0 & 0 & 0 & C_{66}
\end{bmatrix}.
\end{equation}
The mechanical properties are, in general, different along each axis. Orthotropic materials require 9 elastic constants and have as subclasses isotropic materials
(with 2 elastic constants), cubic materials (with 3 elastic constants), and transversely isotropic materials (with 5 elastic constants).
The wood in a tree trunk is an example of a material which is locally orthotropic: the material properties in three perpendicular
directions, axial, radial, and circumferential, are different.
Many crystals and rolled metals are also examples of orthotropic materials.
\section{Extremal polynomials and relations to the determinants of extremal quasiconvex quadratic forms}
\label{sec:extremal.polynomials}

In this section we define the notions of \textit{extremality and equivalence} of homogeneous polynomials.
\begin{Definition}
\label{def:extr.poly}
Assume $m$ and $n$ are natural numbers and $P(x_1,x_2,\dots,x_n)$ is a polynomial of degree $2m$ that is homogeneous of the same degree. Then $P(x)$ is called an extremal polynomial, if $P(x)\geq 0$ for all $x\in\mathbb R^n$ and $P(x)$ cannot be written as a sum of two other non-negative polynomials that are linearly independent.
\end{Definition}

\begin{Definition}
\label{def:pol.equiv}
Assume $m$ and $n$ are natural numbers and $P(x_1,x_2,\dots,x_n)$ and $Q(x_1,x_2,\dots,x_n)$ are polynomials of degree $2m$ that are homogeneous of the same degree. Then $P(x)$ and $Q(x)$ are equivalent if there exists a non-singular matrix $A\in\mathbb R^{n\times n}$ such that $P(x)=Q(Ax).$
\end{Definition}

It is then straightforward to prove that this notion of equivalence is actually an equivalence relation preserving also the extremality of polynomials.

\begin{Theorem}
\label{th:equiv}
The notion of equivalence introduced in Definition~\ref{def:pol.equiv} has the following properties:
\begin{itemize}
\item $P$ is equivalent to itself
\item If $P$ is equivalent to $Q$ then $Q$ is equivalent to $P$
\item If $P$ is equivalent to $Q$ and $Q$ is equivalent to $R,$ then $P$ is equivalent to $R$
\item If $P$ is equivalent to $Q$ and $Q$ is an extremal then $P$ is an extremal too
\end{itemize}

\end{Theorem}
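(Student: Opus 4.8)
The plan is to verify the four listed properties directly, exploiting that the substitutions $x \mapsto Ax$ with $A$ non-singular form the group $\mathrm{GL}_n(\mathbb{R})$ acting linearly on the finite-dimensional space of homogeneous polynomials of degree $2m$, an action that preserves pointwise non-negativity and linear independence. For reflexivity I would take $A = I$. For symmetry, from $P(x) = Q(Ax)$ I would substitute $x \mapsto A^{-1}x$ to get $Q(x) = P(A^{-1}x)$ with $A^{-1}$ non-singular. For transitivity, from $P(x) = Q(Ax)$ and $Q(x) = R(Bx)$ I would compose to get $P(x) = R(BAx)$, with $BA$ non-singular as a product of invertible matrices. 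In each case one should observe that precomposition of a homogeneous degree-$2m$ polynomial with an invertible linear map again yields a homogeneous polynomial of degree exactly $2m$, so we remain within the class of objects to which Definitions~\ref{def:extr.poly} and~\ref{def:pol.equiv} apply.

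For the fourth bullet, suppose $Q$ is extremal and $P(x) = Q(Ax)$ with $A$ non-singular. Since $Q \geq 0$ everywhere and $x \mapsto Ax$ is a bijection of $\mathbb{R}^n$, also $P \geq 0$ everywhere. Arguing by contradiction, if $P$ were not extremal we could write $P = P_1 + P_2$ with $P_1, P_2 \geq 0$ and linearly independent; setting $Q_i(y) := P_i(A^{-1}y)$, I would check that $Q_1, Q_2 \geq 0$, that $Q_1 + Q_2 = P(A^{-1}y) = Q(y)$, and that $Q_1, Q_2$ are linearly independent, since any linear relation among the $Q_i$ pulls back, via $y \mapsto Ax$, to the same relation among the $P_i$. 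This would express the extremal $Q$ as a sum of two linearly independent non-negative polynomials, a contradiction; hence $P$ is extremal.

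The argument is essentially routine, and the only point requiring care — the closest thing to an obstacle — is the bookkeeping about degree and homogeneity under the change of variables: invertibility of $A$ is precisely what prevents the degree from dropping and ensures that the induced decomposition of $Q$ is again of the admissible form appearing in Definition~\ref{def:extr.poly}.
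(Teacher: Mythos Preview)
Your proposal is correct and is precisely the straightforward verification the paper has in mind; indeed, the paper does not spell out a proof at all, merely remarking that the result is ``straightforward to prove,'' so your direct check via the group action of $\mathrm{GL}_n(\mathbb{R})$ on homogeneous degree-$2m$ polynomials is exactly the intended argument.
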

As pointed out in introduction our future goal is describing all extremal quasiconvex quadratic forms and the first step to the
final goal has been made in [\ref{bib:Har.Mil}], where a class of extremals has been found. In order to make progress towards the goal,
one asks natural question: What are the properties of extremal quadratic forms? Such a question has not been addressed in [\ref{bib:Har.Mil}],
but in the present work for the first time. The sought property we believe is the following: The determinant of the $y$-matrix of the form must be an extremal
polynomial, which is not a perfect square. The sufficiency of that statement is proven in the present work for quadratic forms with a linear
elastic orthotropic symmetry. Let us now motivate our choice by some examples.\\

\textbf{Example 1.} The form $f(\xi)=\xi_{11}^2+\xi_{22}^2+\xi_{33}^2$ has a determinant of its $y$-matrix equal to
$y_1^2y_2^2y_3^2$ which is evidently an extremal polynomial, but $f(\xi)$ is obviously not an extremal.\\

\textbf{Example 2.} The form $f(\xi)=\xi_{11}^2+\xi_{22}^2$ has a determinant of its $y$-matrix equal to
$0,$ which is evidently an extremal polynomial, but $f(\xi)$ is obviously not an extremal.\\

\textbf{Example 3.} The determinant of the $y$-matrix of any rank-one form is equivalently zero,
but a rank-one form is not an extremal.\\

\textbf{Example 4.} The most interesting and motivating example, that contains the sought information is the extremal quasiconvex quadratic form
 $$Q(\xi)=\xi_{11}^2+\xi_{22}^2+\xi_{33}^2-2(\xi_{11}\xi_{22}+\xi_{11}\xi_{33}+\xi_{22}\xi_{33})+\xi_{12}^2+\xi_{23}^2+\xi_{31}^2,$$
that appears in [\ref{bib:Har.Mil}] (but which does not derive from a tensor having the symmetries of an elasticity tensor).

It turns out that the polynomial
$$P(y)=y_1^4y_2^2+y_2^4y_3^2+y_3^4y_1^2-3y_1^2y_2^2y_3^2$$
that is the determinant of the $y$-matrix of $Q(\xi)$ is a non-trivial extremal polynomial (by which we mean a polynomial
which is not a perfect square). Let us give a proof of that statement.
\begin{proof}
Assume in contradiction that the polynomial $P(y)$ is not an extremal. Hence there exists a polynomial $P_1(y)$ such that
\begin{equation}
\label{eq:0<P1<P}
0\leq P_1(y)\leq P(y)\quad \text{for all}\quad y\in\mathbb R^3,
\end{equation}
and $P_1(y)$ is not a multiple of $P(y).$ We aim to prove that (\ref{eq:0<P1<P}) implies $P_1=\alpha P$ for some $\alpha\in\mathbb R.$
It is clear that none of the variables $y_i$ appears in $P_1$ with power $5$ or $6$ as otherwise inequality (\ref{eq:0<P1<P})
would be violated. The coefficient of $y_1^4$ in $P_1$ is a quadratic polynomial in $y_2$ and $y_3$ that is less or equal to $y_2^2$ as inequality
(\ref{eq:0<P1<P}) implies when $y_1\to\infty,$ thus it depends only on $y_2,$ i.e., the coefficient of $y_1^4$ in $P_1$ has the form $ay_2^2.$ Similarly
the coefficients of $y_2^4$ and $y_3^4$ in $P_1$ are $by_3^2$ and $cy_1^2$ respectively. Thus $P_1$ has the form
\begin{align*}
P_1(y)&=(ay_1^4y_2^2+by_2^4y_3^2+cy_3^4y_1^2+dy_1^2y_2^2y_3^2)+a_1y_1^3y_2^3+a_2y_2^3y_3^3+a_3y_3^3y_1^3\\
&+a_4y_1^3y_2^2y_3+a_5y_1^3y_2y_3^2+a_6y_2^3y_1^2y_3+a_7y_2^3y_1y_3^2+a_8y_3^3y_1^2y_2+a_9y_3^3y_2^2y_1.
\end{align*}
We call the expression in the brackets in $P_1$ the principal part of $P_1.$
Note, that changing the sign of any of the variables $y_i$ does not change $P(y)$ but changes the sign of all summands in $P_1$ that
have an odd power of $y_i$, thus summing up the inequalities $0\leq P_1(y_1,y_2,y_3)\leq P(y_1,y_2,y_3)$ and $0\leq P_1(-y_1,y_2,y_3)\leq P(-y_1,y_2,y_3)$
we get $0\leq P_2(y)\leq P(y)$ where $P_2$ has no summands with an odd power of $y_1$ and has the same principal part as $P_1.$ Applying the same
idea to $P_2$ for the variable $y_2$ we end up with the inequality
\begin{equation}
\label{eq:0<P.principal<P}
0\leq ay_1^4y_2^2+by_2^4y_3^2+cy_3^4y_1^2+dy_1^2y_2^2y_3^2\leq P(y).
\end{equation}
It is then clear that $0\leq a,b,c\leq 1.$ We have by the Cauchy-Schwartz inequality
$$ay_1^4y_2^2+by_2^4y_3^2+cy_3^4y_1^2\geq 3(abc)^{1/3}y_1^2y_2^2y_3^2$$
and the equality holds for some choice of $y,$ thus we get $3(abc)^{1/3}\geq -d.$
On the other hand as $P(1,1,1)=0,$ then inequality (\ref{eq:0<P.principal<P}) implies $a+b+c+d=0$, thus we get
$$3(abc)^{1/3}\geq a+b+c$$ which means that the equality holds in Cauchy-Schwartz, thus $a=b=c$ and $d=-3a,$ thus the principal part
of $P_1(y)$ is a multiple of $P(y).$ On the other hand testing (\ref{eq:0<P1<P}) with $y=(1,t,0)$ we get
$$at^2+a_1t^3\geq 0\quad\text{for all}\quad t\in\mathbb R,$$
thus $a_1=0.$ Similarly we obtain $a_2=a_3=0.$ Therefore (\ref{eq:0<P1<P})
amounts to the following inequality
\begin{equation}
\label{eq:0<P_2<P}
0\leq aP(y)+a_4y_1^3y_2^2y_3+a_5y_1^3y_2y_3^2+a_6y_2^3y_1^2y_3+a_7y_2^3y_1y_3^2+a_8y_3^3y_1^2y_2+a_9y_3^3y_2^2y_1\leq P(y)
\end{equation}
Again, the equalities $P(1,1,1)=P(-1,1,1)$ and (\ref{eq:0<P_2<P}) imply $a_6+a_8=0.$ Thus if we sum inequalities (\ref{eq:0<P_2<P}) and the
resulting inequality in (\ref{eq:0<P_2<P}) when changing the sign of $y_1$ we get
$$|a_6y_1^2y_2y_3(y_2^2-y_3^2)|\leq \beta P(y)\quad\text{for some}\quad \beta\geq 0.$$
Taking $y_3=y_1$ in the last inequality we obtain
$$|a_6y_1^3y_2(y_2-y_1)(y_2+y_1)|\leq 2\beta y_1^2y_2^2(y_1-y_2)^2,$$
which implies $a_6=0$ if we let $y_1,y_2\to 1$ and $y_1\neq y_2.$ As the inequality is symmetric in the variables $a_i,$ then it is
straightforward to get $a_i=0$. Finally we get $P_1(y)=aP(y)$ which is a contradiction.

\end{proof}

\section{Extremal quadratic forms with orthotropic symmetry}
\label{sec:Extremal.orthot.symmetry}

The next theorem is the main result of the paper.

\begin{Theorem}
\label{th:sufficient.condition}
Assume the quadratic form $f(\xi)=(\xi+\xi^T)C(\xi+\xi^T)^T$ depending on the strain has orthotropic symmetry, i.e., the stiffness matrix $C$ has the form (\ref{stiffness.matrix}). Assume furthermore that $C_{11}C_{22}C_{33}\neq 0.$ If the determinant of the $y-$matrix of $f(x,y)$ is an extremal polynomial
that is not a perfect square, then $f$ is an extremal form.
\end{Theorem}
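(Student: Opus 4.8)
The plan is to argue by contradiction: assuming $f$ is \emph{not} extremal, I will show that $\det T$ must be a perfect square, contradicting the hypothesis. Throughout I use that $f$ is quasiconvex, i.e.\ the acoustic tensor $T(y)$ is positive semidefinite for every $y$, as is the standing context of the paper. By Definition~\ref{def:1} (and Van Hove's theorem, which identifies quasiconvexity with rank-one convexity, together with the fact that rank-one convexity of a quadratic form is positive semidefiniteness of its acoustic tensor), $f$ not extremal means that for some nonzero $3\times3$ matrix $B$ the form $g(\xi):=f(\xi)-(B;\xi)^2$ is still quasiconvex. Writing $u(y):=By$, the acoustic tensor of $g$ is $G(y)=T(y)-u(y)u(y)^{T}$, and quasiconvexity of $g$ reads $G(y)\succeq 0$ for all $y$.

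First I would pass to a purely algebraic statement. Since $T(y)\succeq 0$ implies $\operatorname{adj}T(y)\succeq 0$ (the adjugate, with $T\operatorname{adj}T=(\det T)I$), and $G(y)\succeq 0$ implies $\det G(y)\ge0$, the matrix determinant lemma (rank-one update) gives
\[
\det T(y)=\det G(y)+u(y)^{T}\bigl(\operatorname{adj}T(y)\bigr)u(y),
\]
which exhibits the homogeneous sextic $\det T$ as a sum of two nonnegative polynomials. As $\det T$ is not a perfect square it is not identically $0$, so by the extremality of $\det T$ (Definition~\ref{def:extr.poly}) both summands are nonnegative multiples of $\det T$; in particular $u^{T}(\operatorname{adj}T)u=c\det T$ for some $c\ge0$. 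Here $c>0$, for $c=0$ would force $u(y)^{T}(\operatorname{adj}T(y))u(y)=0$ on the dense open set $\{\det T\ne0\}$ where $\operatorname{adj}T\succ0$, hence $By\equiv0$ and $B=0$. Rescaling $B$ by $1/\sqrt c$, we may assume
\begin{equation}\label{plan:star}
(By)^{T}\bigl(\operatorname{adj}T(y)\bigr)(By)=\det T(y)\qquad\text{for every }y\in\mathbb R^{3},
\end{equation}
with $B\ne0$ (equivalently, $(By)^{T}T(y)^{-1}(By)\equiv1$ wherever $T$ is invertible). Quasiconvexity now drops out of the picture; what remains is to prove: \emph{if $T$ is the acoustic tensor of an orthotropic form with $C_{11}C_{22}C_{33}\ne0$, then the existence of a nonzero $B$ satisfying \eqref{plan:star} implies $\det T$ is a perfect square.}

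To attack \eqref{plan:star} I would also record that on the real zero set of $\det T$ the implication ``$v^{T}Mv=0,\ M\succeq0\Rightarrow Mv=0$'' forces the degree-$5$ polynomial vector $z(y):=(\operatorname{adj}T(y))(By)$ to vanish. Then insert the explicit orthotropic acoustic tensor, which has the form
\[
T(y)=\begin{bmatrix}
a_1y_1^{2}+r_3y_2^{2}+r_2y_3^{2} & \ell_3\,y_1y_2 & \ell_2\,y_1y_3\\[2pt]
\ell_3\,y_1y_2 & r_3y_1^{2}+a_2y_2^{2}+r_1y_3^{2} & \ell_1\,y_2y_3\\[2pt]
\ell_2\,y_1y_3 & \ell_1\,y_2y_3 & r_2y_1^{2}+r_1y_2^{2}+a_3y_3^{2}
\end{bmatrix},
\]
where $a_i>0$ (these are $C_{11},C_{22},C_{33}$ up to a harmless factor, positive since $T\succeq0$ and they are nonzero), $r_i\ge0$, $\ell_i\in\mathbb R$. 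Evaluating \eqref{plan:star}, and the vanishing of $z$, at the coordinate axes $y=e_i$ and then on the coordinate planes $y_i=0$ — where $T$ degenerates and $\det T$ factors — yields a chain of linear and quadratic relations among the entries $B_{ij}$ that, using $a_1a_2a_3\ne0$, successively pins down the columns and then the full pattern of $B$. With $B$ in this normal form, a direct expansion of $\det T$ should display it as the square of a homogeneous cubic, which is the sought contradiction.

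The step I expect to be the main obstacle is this last one: organizing the computation into cases according to which of $r_1,r_2,r_3$ (essentially $C_{44},C_{55},C_{66}$) vanish and which of $\ell_1,\ell_2,\ell_3$ (essentially $C_{23}+C_{44}$, $C_{13}+C_{55}$, $C_{12}+C_{66}$) vanish, solving in each case the overdetermined system that \eqref{plan:star} imposes on $B$, and then verifying that every surviving $B$ makes $\det T$ a perfect square. A smaller point that also needs care is the opening reduction — the equivalence between Definition~\ref{def:1} and the nonexistence of a nonzero $B$ with $T(y)-(By)(By)^{T}\succeq0$ for all $y$ — which rests on the chain of equivalences quasiconvex $\Leftrightarrow$ rank-one convex $\Leftrightarrow$ acoustic tensor positive semidefinite.
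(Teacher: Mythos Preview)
Your reduction to the algebraic identity
\[
(By)^{T}\bigl(\operatorname{adj}T(y)\bigr)(By)=c\,\det T(y)
\]
is the same as the paper's equation~(\ref{sum.eq.det}), and your route there is in fact a bit tidier: you use the rank-one determinant update $\det(T-uu^{T})=\det T-u^{T}(\operatorname{adj}T)u$ and the nonnegativity of both summands directly, whereas the paper goes through Brunn--Minkowski and the one-parameter family $T_t$. Your disposal of the case $c=0$ (on $\{\det T\ne0\}$ the adjugate is positive definite, forcing $By\equiv0$) is also cleaner than the paper's Case~1.

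The gap is everything after that point. Your plan---evaluate on coordinate axes and coordinate planes, then sort through cases according to which of the $r_i$ and $\ell_i$ vanish---is not yet a proof, and you say as much. The paper avoids that brute-force case split by using extremality a \emph{second} time. The orthotropic structure makes $\det T$ even in each $y_i$, so in the identity above only the ``even'' part of $(By)^{T}(\operatorname{adj}T)(By)$ survives; that even part then splits as a sum of four quadratic forms in the cofactor matrix (the paper's display~(\ref{k.det.rearranged})), each of which is nonnegative because $\operatorname{cof}T\succeq0$. Extremality of $\det T$ forces each of the four pieces to be a scalar multiple of $\det T$, and this immediately reduces the problem to $B$ diagonal, i.e.\ to the single identity
\[
\frac{1}{s}\det T(y)=\sum_{i,j}b_{ii}b_{jj}\,y_iy_j\,\operatorname{cof}_{ij}T(y).
\]
Matching $y_i^{6}$ coefficients gives $a_{ii}=sb_{ii}^{2}$, whence $\det T_G\equiv0$ for $G=F-s(\sum b_{ii}\xi_{ii})^{2}$; a short computation on the explicit $G$ then exhibits $\det T$ in each remaining subcase as a sum of squares or a product $y_i^{2}\cdot(\text{nonnegative quartic})$, contradicting ``extremal and not a perfect square''. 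That second invocation of extremality is the missing idea in your outline; without it the overdetermined system you propose to solve for $B$ is substantially harder to control, and your proposal does not yet show it can be closed.
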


\begin{proof}
Assume in contradiction that $f(x,y)$ is not an extremal, then there exists a rank-one form $(x^TBy)^2$ such that
$$f(x,y)-(x^TBy)^2\geq 0\qquad\text{for all}\qquad x,y\in\mathbb R^3.$$
Let us now prove that then $f(x,y)=\alpha(x^TBy)^2$ for some $\alpha\geq 1.$
Recall the Brunn-Minkowski inequality for determinants [\ref{bib:Bec.Bel},\ref{bib:Bel}], which will be utilized in the sequel. 

\begin{Theorem}[Brunn-Minkowski inequality]
\label{th:Minkowsky}
Assume $n\in\mathbb N$ and $A$ and $B$ are $n\times n$ symmetric positive semi-definite matrices. Then the following inequality holds:
$$(\mathrm{det}(A+B))^{1/n}\geq(\mathrm{det}(A))^{1/n}+(\mathrm{det}(B))^{1/n}.$$
\end{Theorem}

Assume now $f(\xi)$ is a quasiconvex quadratic form that has a linear elastic orthotropic symmetry. Then $f$ has the form
$f(\xi)=(\xi+\xi^T)^TC(\xi+\xi^T),$ where the stiffness has the form of (\ref{stiffness.matrix}). Thus we get
$$f(\xi)=\sum_{i,j=1}^3{C_{ij}}\xi_{ii}\xi_{jj}+C_{44}(\xi_{12}+\xi_{21})^2+C_{55}(\xi_{13}+\xi_{31})^2+C_{66}(\xi_{23}+\xi_{32})^2.$$
It is clear that $f$ is then rank-one equivalent to a form
$$F(\xi)=\sum_{i,j=1}^3{a_{ij}}\xi_{ii}\xi_{jj}+a_1(\xi_{12}^2+\xi_{21}^2)+a_2(\xi_{13}^2+\xi_{31}^2)+a_3(\xi_{23}^2+\xi_{32}^2),$$
where $a_{ii}=C_{ii}$ and $a_i=C_{jj},$ with $j=i+3$ for $i=1,2,3.$
From the inequality
$$F(x,y)-(x^TBy)^2\geq 0$$
 we get that
 \begin{equation}
 \label{t-positive}
 F(x,y)-t(x^TBy)^2\geq 0\quad\text{ for all }\quad t\in [0,1].
 \end{equation}
Denote now by $T(y)$ the $y-$matrix of the biquadratic form $F(x,y)$ and by $T_t(y)$ the $y-$matrix of the biquadratic form $F(x,y)-t(x^TBy)^2.$
Inequality (\ref{t-positive}) now implies that the $y-$matrix of the form $F(x,y)-t(x^TBy)^2,$ i.e, the matrix $T_t(y)$ is positive semi-definite for all $y\in\mathbb R^3$ and $t\in[0,1].$ The equality
$$T(y)=[T(y)-T_t(y)]+[T_t(y)],$$
the positive semi-definiteness of the matrices $T(y)-T_t(y)$ and $T_t(y)$ and the Brunn-Minkowski inequality imply
$$\left(\mathrm{det}(T(y))\right)^{1/3}\geq \left(\mathrm{det}(T(y)-T_t(y))\right)^{1/3}+\left(\mathrm{det}(T_t(y))\right)^{1/3},$$
or
\begin{equation}
 \label{det.geq.det(t)}
\mathrm{det}(T(y))\geq\mathrm{det}(T_t(y)).
\end{equation}
It is easy to calculate that
\begin{equation}
\label{detreln}
\mathrm{det}(T_t(y))=\mathrm{det}(T(y))-t\sum_{i,j=1}^3l_{i}l_{j}\mathrm{cof}_{ij}(T(y)),
\end{equation}
where $l_i=\sum_{j=1}^3b_{ij}y_j.$
Inequality (\ref{det.geq.det(t)}) now implies
$$\mathrm{det}(T(y))\geq\mathrm{det}(T(y))-t\sum_{i,j=1}^3l_{i}l_{j}\mathrm{cof}_{ij}(T(y)).$$
As by the requirement of the theorem $\mathrm{det}(T(y))$ is not identically zero and for $t=0$ the right hand side of the last inequality is
exactly $\mathrm{det}(T(y))$, then by the extremality of $\mathrm{det}(T(y))$ the right hand side must be a multiple of $\mathrm{det}(T(y)),$ i.e.,
$$\mathrm{det}(T(y))-t\sum_{i,j=1}^3l_{i}l_{j}\mathrm{cof}_{ij}(T(y))=\lambda(t)\mathrm{det}(T(y)),$$
which gives
\begin{equation}
\label{sum.eq.det.t}
\sum_{i,j=1}^3l_{i}l_{j}\mathrm{cof}_{ij}(T(y))=\frac{1-\lambda(t)}{t}\mathrm{det}(T(y)),\qquad\text{for}\qquad t\neq 0.
\end{equation}
Both parts of the equality (\ref{sum.eq.det.t}) are polynomials in $y=(y_1,y_2,y_3)$ thus the expression $\frac{1-\lambda(t)}{t}$ must be constant, therefore
\begin{equation}
\label{sum.eq.det}
\sum_{i,j=1}^3l_{i}l_{j}\mathrm{cof}_{ij}(T(y))=d\cdot \mathrm{det}(T(y)),
\end{equation}
where $d\in\mathbb R.$ The positive semi-definiteness of $T(y)$ implies positive semi-definiteness of the cofactor matrix $\mathrm{cof}(T(y)),$ thus
$$\sum_{i,j=1}^3l_{i}l_{j}\mathrm{cof}_{ij}(T(y))\geq 0\qquad\text{for all}\qquad y\in\mathbb R^3.$$
We have on the other hand $\mathrm{det}(T(y))\geq0,$ and $\mathrm{det}(T(y))$ is not identically zero, thus $d\geq 0.$ Consider now two main cases:\\

\textbf{Case 1: $d=0.$} In this case identity (\ref{sum.eq.det}) becomes
$$
\sum_{i,j=1}^3l_{i}l_{j}\mathrm{cof}_{ij}(T(y))\equiv 0.
$$
Again, taking into account the positive semi-definiteness of $\mathrm{cof}(T(y))$ we get a system of three identities:
\begin{equation}
\label{syst.lin.cof}
l_1\mathrm{cof}_{i1}(T(y))+l_2\mathrm{cof}_{i2}(T(y))+l_3\mathrm{cof}_{i3}(T(y))\equiv 0,\qquad i=1,2,3.
\end{equation}
As the matrix $B$ is different from the zero matrix, it has a rank at least $1,$ thus the solution to the system of linear equations $l_i=0, i=1,2,3$ is a
proper subspace $V$ of $\mathbb R^3,$ i.e. is included in a hyperplane,
which means that the columns of the cofactor matrix $\mathrm{cof}(T(y))$ are linearly dependent in $\mathbb R^3\setminus V,$ i.e.,
 $\mathrm{det}(\mathrm{cof}(T(y)))=0$ for $y\in\mathbb R^3\setminus V.$ Therefore, since $\mathrm{det}(\mathrm{cof}(T(y)))$ is continuous in $\mathbb R^3$
it must be zero for all $y\in\mathbb R^3$ and by taking the determinant of the identity $T(y)[\mathrm{cof}(T(y))]^T=\mathrm{det}(T(y))I$ we get  $\mathrm{det}(T(y))\equiv 0,$  which is a contradiction.
\textbf{Case 1} is now proved.\\

\textbf{Case 2: $d>0.$}  In this case identity (\ref{sum.eq.det}) implies
\begin{equation}
\label{det.T.cof.k}
 \mathrm{det}(T(y))=k\sum_{i,j=1}^3l_{i}l_{j}\mathrm{cof}_{ij}(T(y)),\qquad k>0.
\end{equation}
Our goal is now getting a contradiction from $(\ref{det.T.cof.k})$.
Observe that
$$T(y)=
\begin{bmatrix}
a_{11}y_1^2+a_1y_2^2+a_2y_3^2 & a_{12}y_1y_2 & a_{13}y_1y_3\\
a_{12}y_1y_2 & a_{22}y_2^2+a_3y_3^2+a_1y_1^2 & a_{23}y_2y_3\\
a_{13}y_1y_3 & a_{23}y_2y_3 & a_{33}y_3^2+a_2y_1^2+a_3y_2^2
\end{bmatrix},
$$
thus we have the following formulae for the co-factors,
\begin{equation}
\label{cof.ii}
\mathrm{cof}_{ii}(T(y))=P_{ii}(y),
\end{equation}
and for $\ell\ne m$
\begin{equation}
\label{cof.ij}
\mathrm{cof}_{\ell m}(T(y))=y_\ell y_m P_{\ell m}(y),
\end{equation}
where $P_{ij}$ is a second or forth degree polynomial in $y$ depending only on $y_k^2$.
It follows that $\mathrm{det}(T(y))$ is a sixth order polynomial in $y$ depending only on $y_i^2$. Recalling the form of the entries of the cofactor matrix
and taking into account the fact that the coefficients of the expressions like $y_1^{\alpha_1}y_2^{\alpha_2}y_3^{\alpha_3}$ in the right hand side of (\ref{det.T.cof.k}) are zero, where at least one of the exponents $\alpha_i$ is odd, we get from (\ref{det.T.cof.k}) the following identity:

\begin{align}
\label{k.det}
\begin{split}
 \frac{1}{k}\mathrm{det}(T(y))&=(b_{11}^2y_1^2+b_{12}^2y_2^2+b_{13}^2y_3^2)\mathrm{cof}_{11}(T(y))\\
 &+(b_{21}^2y_1^2+b_{22}^2y_2^2+b_{23}^2y_3^2)\mathrm{cof}_{22}(T(y))\\
 &+(b_{31}^2y_1^2+b_{32}^2y_2^2+b_{33}^2y_3^2)\mathrm{cof}_{33}(T(y))\\
 &+2(b_{11}b_{22}+b_{12}b_{21})y_1y_2\mathrm{cof}_{12}(T(y))\\
 &+2(b_{11}b_{33}+b_{13}b_{31})y_1y_3\mathrm{cof}_{13}(T(y))\\
 &+2(b_{22}b_{33}+b_{23}b_{32})y_2y_3\mathrm{cof}_{23}(T(y)).\\
\end{split}
\end{align}
The right hand side of identity (\ref{k.det}) can be rearranged as follows:
\begin{align}
\label{k.det.rearranged}
\begin{split}
 \frac{1}{k}\mathrm{det}(T(y))&=(b_{11}y_1,b_{22}y_2,b_{33}y_3)\mathrm{cof}(T(y))(b_{11}y_1,b_{22}y_2,b_{33}y_3)^T+\\
 &+\left(b_{12}^2y_2^2\mathrm{cof}_{11}(T(y))+2b_{12}b_{21}y_1y_2\mathrm{cof}_{12}(T(y))+b_{21}^2y_1^2\mathrm{cof}_{22}(T(y))\right)\\
 &+\left(b_{13}^2y_3^2\mathrm{cof}_{11}(T(y))+2b_{13}b_{31}y_1y_3\mathrm{cof}_{13}(T(y))+b_{31}^2y_1^2\mathrm{cof}_{33}(T(y))\right)\\
 &+\left(b_{23}^2y_3^2\mathrm{cof}_{22}(T(y))+2b_{23}b_{32}y_2y_3\mathrm{cof}_{23}(T(y))+b_{32}^2y_2^2\mathrm{cof}_{33}(T(y))\right).\\
\end{split}
\end{align}
Since the cofactor matrix $\mathrm{cof}(T(y))$ is positive semi-definite, then each of the four summands is non-negative. Thus, the extremality
of the determinant $\mathrm{det}(T(y))$, that is a sum of four non-negative polynomials, implies that each summand is either identically zero or
a non-zero multiple of the determinant. On the other hand all four summands in (\ref{k.det.rearranged}) cannot be simultaneously identically zero.
Consider the following two cases:\\

\textbf{Case a: The first summand in (\ref{k.det.rearranged}) is a nonzero multiple of  $\mathrm{det}(T(y))$.} In this case we have the following representation of the determinant:
\begin{align}
\label{k.det.repr}
\begin{split}
 \frac{1}{s}\mathrm{det}(T(y))&= (b_{11}y_1,b_{22}y_2,b_{33}y_3)\mathrm{cof}(T(y))(b_{11}y_1,b_{22}y_2,b_{33}y_3)^T \\
& =
 \sum_{i,j=1}^3b_{ii}b_{jj}y_iy_j\mathrm{cof}_{ij}(T(y)),
\end{split}
\end{align}
where $s>0.$ Equating the coefficients of $y_1^6,$ $y_2^6$ and $y_3^6$ of the right and left hand sides of (\ref{k.det.repr}) we get
$a_{ii}=sb_{ii}^2,$ for $i=1,2,3.$
Now look at $G(\xi)=F(\xi)-s(\sum_{i=1}^3b_{ii}\xi_{ii})^2$. It has the form
$$G(\xi)=a_1(\xi_{12}^2+\xi_{21}^2)+a_2(\xi_{13}^2+\xi_{31}^2)+a_3(\xi_{23}^2+\xi_{32}^2)+2b_1\xi_{11}\xi_{22}+2b_2\xi_{11}\xi_{33}+2b_3\xi_{22}\xi_{33}.$$
and the determinant of its $y$-matrix is
\begin{align}
\label{det.T.G(y)}
\begin{split}
\mathrm{det}(T_{G}(y))&=(a_1^2a_2-b_1^2a_2)y_1^4y_2^2+(a_1^2a_3-b_1^2a_3)y_1^2y_2^4\\
&+(a_2^2a_1-b_2^2a_1)y_1^4y_3^2+(a_2^2a_3-b_2^2a_3)y_1^2y_3^4\\
&+(a_3^2a_1-b_3^2a_1)y_2^4y_3^2+(a_3^2a_2-b_3^2a_2)y_2^2y_3^4\\
&+2(a_1a_2a_3+b_1b_2b_3)y_1^2y_2^2y_3^2.
\end{split}
\end{align}
On the other hand by analogy with the formula (\ref{detreln})
(with $t=s$ and $B$ being diagonal) we have
$$
\mathrm{det}(T_{G}(y))=\mathrm{det}(T(y))-s\sum_{i,j=1}^3b_{ii}b_{jj}y_iy_j\mathrm{cof}_{ij}(T(y))=0.
$$

Consider now the three different subcases:\\

\textbf{Case a1: For all $i=1,2,3$ there holds $a_i>0.$}
It is clear that $\mathrm{det}(T_{G}(y))$ is identically zero if and only if $|b_i|=a_i$ and $a_1a_2a_3=-b_1b_2b_3.$ Therefore there are two cases possible: Either all $b_i$ are negative, or two of them are positive and the other one is negative. Note, that we can change the sign of any $x_i$ that will change the signs of two of $b_i,$ which means that one can without loss of generality assume that all $b_i$ are negative. Next, the substitution $x_i=\lambda x_i'$ and $y_i=\lambda y_i'$, where $\lambda_i\neq 0$ changes each $a_i$ by the factor $\lambda_i^2,$ transferring $G$ to a rank-one equivalent quadratic form of the same form with $a_i=1.$ The above non-singular transformations do not change the form of a linear combination of the variables $\xi_{11},$ $\xi_{22}$ and $\xi_{33},$ thus we end up with the formula for $F(\xi)$ up to rank-one equivalence:
\begin{equation}
\label{final.form.of.F1}
 F(\xi)=(a\xi_{11}+b\xi_{22}+c\xi_{33})^2+(\xi_{12}-\xi_{21})^2+(\xi_{13}-\xi_{31})^2+(\xi_{23}-\xi_{32})^2.
\end{equation}

It is straightforward to calculate
$$\mathrm{det}(T_{F}(y))=y_1^2(ay_1^2+by_2^2+cy_3^2)^2+y_2^2(ay_1^2+by_2^2+cy_3^2)^2+y_3^2(ay_1^2+by_2^2+cy_3^2)^2,$$
which is not an extremal polynomial unless $ay_1^2+by_2^2+cy_3^2$ is identically zero, i.e., $a=b=c=0,$ thus we get $C_{11}=0$
which is a contradiction. \textbf{Case a1} is proved.\\

\textbf{Case a2: $a_3=0,$ $a_1,a_2>0.$} In this case we again obtain from $\mathrm{det}(T_{G}(y))\equiv 0,$ that $|b_i|=a_i,$ for $i=1,2,3.$ The same argument as in the previous case leads to a situation
\begin{equation}
\label{final.form.of.F2}
 F(\xi)=(a\xi_{11}+b\xi_{22}+c\xi_{33})^2+(\xi_{12}+\xi_{21})^2+(\xi_{13}+\xi_{31})^2.
\end{equation}
Observe, that from the form of $F(\xi)$ we have
$$\mathrm{det}(T_{F}(y))=y_1^2P(y),$$
where $P(y)$ is a fourth degree polynomial of $y.$ Hilbert's theorem [\ref{bib:Hil}] asserts that any fourth degree non-negative
homogeneous polynomial in three variables is a sum of squares of degree two polynomials. Next, we have that $P(y)\geq 0,$ thus as $\mathrm{deg}(P)=4,$ then by
Hilbert's theorem $P(y)$ is a sum of squares of second degree polynomials, which means that
$\mathrm{det}(T_{F}(y))=y_1^2P(y)$ is either a perfect square or not an extremal, which is a contradiction. \textbf{Case a2} is proved.\\

\textbf{Case a3: $a_2=a_3=0,$ $a_1>0.$ } In this case $\mathrm{det}(T_{G}(y))\equiv 0$ implies $b_2=b_3=0$ thus we arrive at
 \begin{equation}
\label{final.form.of.F3}
 F(\xi)=(a\xi_{11}+b\xi_{22}+c\xi_{33})^2+\xi_{12}^2+\xi_{21}^2+2d\xi_{12}\xi_{21}.
\end{equation}
Like in the previous case, we again have $\mathrm{det}(T_{F}(y))=y_1^2P(y),$ and the same argument as in \textbf{Case a2}, completes the proof.\\

\textbf{Case a3: $a_1=a_2=a_3=0.$} In this case we have
$$F(\xi)=\sum_{i,j=1}^3a_{ij}\xi_{ii}\xi_{jj},$$
thus
$$\mathrm{det}(T_{G}(y))=ay_1^2y_2^2y_3^2,$$
which is again a contradiction. \textbf{Case a} is now completely proved.\\

\textbf{Case b: The second summand in (\ref{k.det.rearranged}) is a nonzero multiple of  $\mathrm{det}(T(y))$.}
 Observe, that we get in this case
 \begin{align}
\label{k.det.repr1}
 \frac{1}{s}\mathrm{det}(T(y))&=(b_{21}y_1,b_{12}y_2,0\cdot y_3)\mathrm{cof}(T(y))(b_{21}y_1,b_{12}y_2,0\cdot y_3)^T
\end{align}
for some $s>0,$ therefore \textbf{Case b} reduces to \textbf{Case a}. The theorem is proved now.
 \end{proof}

\section*{Acknowledgements}
D. Harutyunyan thanks Yury Grabovsky for useful discussion and Robert Lipton for his interest
and for pointing out Hilbert's theorem in [\ref{bib:Hil}]. The authors are grateful to the National Science
Foundation for support through grant DMS-1211359.

\end{document}